\newcommand{\C}{\mathbb C}  
\newcommand{\N}{\mathbb N}  
\newcommand{\PP}{\mathbb P}
\newcommand{\G}{\mathbb G}
\newcommand{\A}{\mathbb A}
\newcommand{\vtl}{\, | \,}
\newcommand{\codim}{\mbox{codim}}
\newcommand{\join}{\vee}
\newcommand{\lra}{\longrightarrow}
\newcommand{\dra}{\dashrightarrow}
\newcommand{\de}{\delta}
\newcommand{\si}{\sigma}
\newcommand{\Ga}{\Gamma}
\newcommand{\De}{\Delta}
\newcommand{\La}{\Lambda}
\newcommand{\Si}{\Sigma}
\newcommand{\Tht}{\Theta}
\newcommand{\iso}{\cong}
\newtheorem{coro}{Corollary}
\newtheorem{defin}{Definition}
\newtheorem{lemma}{Lemma}
\newtheorem{prop}{Proposition}
\newtheorem{theo}{Theorem}
\newtheorem*{theo-nonumb}{Multi-secant Lemma}
\begin{document}
%opening
\title{Multi-Secant Lemma}
\author{J.Y. Kaminski$^1$, A. Kanel-Belov$^2$ and M. Teicher$^2$\\ \\
$^1$ Holon Institute of Technology, \\
Depart. of Computer Science and Mathematics, \\
Holon, Israel. \\ 
Email: kaminsj@hit.ac.il. \\ \\
$^2$ Bar-Ilan University, \\
Depart. of Mathematics, \\
Ramat-Gan, Israel. \\
Email: {kanel@mccme.ru, teicher@math.biu.ac.il}
}
\date{}
\maketitle

{\bf Keywords: Algebraic geometry, Trisecant lemma}

\begin{abstract}
We present a new generalization of the classical trisecant lemma. Our approach is quite different from previous generalizations~\cite{Laudal-77, Ran-91, Adlandsvik-87, Adlandsvik-88, Flenner-all-99, Kaminski-06}. 

Let $X$ be an equidimensional projective variety of dimension $d$. For a given $k \leq d+1$, we are interested in the study of the variety of $k-$secants. The classical trisecant lemma just considers the case where $k=3$ while in \cite{Ran-91} the case $k = d+2$ is considered. Secants of order from 4 to $d+1$ provide service for our main result. 

In this paper, we prove that if the variety of $k-$secants ($k \leq d+1$) satisfies the three following conditions: (i) trough every point in $X$, passes at least one $k-$secant, (ii) the variety of $k-$secant satisfies a strong connectivity property that we defined in the sequel, (iii) every $k-$ secant is also a $(k+1)-$secant, then the variety $X$ can be embedded into $\PP^{d+1}$. The new assumption, introduced here, that we called strong connectivity is essential because a naive generalization that does not incorporate this assumption fails as we show in some example. 

The paper concludes with some conjectures concerning the essence of the strong connectivity assumption.
\end{abstract}

%=========================================
%=========================================
\section{Introduction}

The classic trisecant lemma states that if $X$ is an integral curve of $\PP^3$ then the variety of trisecants has dimension one, unless the curve is planar and has degree at least $3$, in which case the variety of trisecants has dimension 2. Several generalizations of this lemma has been considered \cite{Laudal-77, Ran-91, Adlandsvik-87, Adlandsvik-88, Flenner-all-99, Kaminski-06}. In \cite{Laudal-77}, the case of an integral curve embedded in $\PP^3$ is further investigated leading to a result on the planar sections of the such a curve. On the other hand, in \cite{Ran-91}, the case of higher dimensional varieties, possibly reducible, is inquired. For our concern, the main result of \cite{Ran-91} is that if $m$ is the dimension of the variety, then the union of a family of $(m+2)$-secant lines has dimension at most $m+1$. A further generalization of this result is given in \cite{Adlandsvik-87, Adlandsvik-88, Flenner-all-99}. In this latter case, the setting is the following. Let $X$ in an irreducible projective variety over an algebraically closed field of characteristics zero. For $r \geq 3$, if every $(r-2)-$plane $\overline{x_1 ... x_{r-1}}$, where the $x_i$ are generic points, also meets $X$ in an $r-$th point $x_r$ different from $x_1,...,x_{r-1}$, then $X$ is contained in a linear subspace $L$, with $\codim_L X \leq r-2$.

Here we investigate the case of lines that intersect the variety $X$ (supposed to be equidimensional) in $m$ points such that $m \leq \dim(X)+1$. We prove the following theorem. 
\begin{theo-nonumb}
Consider an equidimensional variety $X$, of dimension $d$. For $m \leq d+1$, if the variety of $m-$secant satisfies the following assumption:
\begin{enumerate}
\item through every point in $X$ passes at least one $m-$secant,
\item the variety of $m-$secant is strongly connected,
\item every $m-$secant is also a $(m+1)-$secant,
\end{enumerate}
then the variety $X$ can be embedded in $\PP^{d+1}$.
\end{theo-nonumb}

Roughly speaking, strong connectivity means that two $m-$secants $l_1$ and $l_2$ can be joined by a finite sequence $\{(p_i,u_i)\}_{i=1,..,n}$ where $u_1=l_1$, $u_n = l_n$, and each line $u_i$ is a $m-$secant passing though $p_i \in X$. A precise statement is given in definition~\ref{def::strong_connectivity}. This condition is not only technical, but really essential because a naive generalization of the trisecant lemma fails as the following example shows. 

{\bf Example 1} {\it Consider the four circles $C_1,C_2,C_3,C_4$ in $\C^3$ respectively defined by} 
$$
\begin{array}{c}
\{z=0,x^2+y^2-1=0\}, \\
\{z=1,x^2+y^2-1=0\}, \\
\{z=2,x^2+y^2-1=0\}, \\
\{z=3,x^2+y^2-1=0\}.
\end{array}
$$ 
{\it Let $Q_0$ be the cylinder defined by $x^2+y^2-1=0$. Consider now the surfaces $S_1,S_2,S_3,S_4$ obtained by the following product $S_i = C_i \times \C$. These surfaces are embedded into $\C^4$. We consider their Zariski closure $Y_1,Y_2,Y_3,Y_4$ in $\PP^4$. Let $Q_1 = Q_0 \times \C$ and $Q$ its Zariski closure in $\PP^4$. Let $S$ be the set of lines contained in $Q$. It can be shown easily that the variety $S$ is not strongly connected but satisfies the other two conditions of the multi-secant lemma (with $m = 3$), while the union of surfaces $S_1 \cup S_2 \cup S_3 \cup S_4$ is not embedded in $\PP^3$.} 

Through this paper, we deal with complex algebraic varieties or equivalently with varieties defined over an algebraically closed field of zero characteristic. However, our considerations and approach are purely algebraic. It is worth noting the results require the field being of zero characteristics. Indeed it is well known that the trisecant lemma is not true in positive characteristics, as shown in an example due to Mumford and published in~\cite{Shokurov}.

%Consider a field $K$ of characteristic $p$.  Consider now the curve in $\PP^3$, over the field $K$, defined by the ideal $<\!y^p-zt^{p-1},x^p-yt^{p-1}\!> \cap  <\!y^p-zt^{p-1},(x- \la_1 t)^p-yt^{p-1}\!> \cap <\!y^p-zt^{p-1},(x- \la_2 t)^p-yt^{p-1}\!> \subset K[x,y,z,t]$, with $t=0$ being the plane at infinity and $\la_1, \la_2$ two non-zero elements of $K$. All the tangents are parallel and intersect at $(1,0,0,0)$. Every tangent is also a trise
%The tangent space at $(x_0,y_0,z_0,t_0)$ is given by the following system of linear equations: $\{t_0^{p-1}z+(p-1)z_0t_0^{p-2}t=0, t_0^{p-1}y+(p-1)y_0t_0^{p-2}t=0\}$. Every two tangent spaces are parallel and therefore they all contain the same point at infinity.

The paper is organized as follows. For sake of completeness, in section~\ref{sec::background} we mostly recall standard material and introduce some definitions, we use in the sequel. Then section~\ref{sec::core} is the core of the paper and contains the main results. 

%=========================================
%=========================================
\section{Notations and Background}
\label{sec::background}

In this section, we recall some standard material on incident varieties, that will be used in the sequel.

%=========================================
\subsection{Variety of Incident Lines}

Let $\G(1,n) = G(2,n+1)$ be the Grassmannian of lines included in $\PP^n$. Note that we used $\G$ for the projective entity and $G$ for the affine case. Remind that $\G(1,n)$ can be canonically embedded in  $\PP^{N_1}$, where $N_1=\binom{2}{n+1}-1$, by the Pl\"ucker embedding and that $\dim(\G(1,n))=2n-2$. Hence a line in $\PP^n$ can be regarded as a point in $\PP^{N_1}$, satisfying the so-called Pl\"ucker relations. These relations are quadratic equations that generate a homogeneous ideal, say $I_{\G(1,n)}$, defining $\G(1,n)$ as a closed subvariety of $\PP^{N_1}$. Similarly the Grassmannian, $\G(k,n)$, gives a parametrization of the $k-$dimensional linear subspaces of $\PP^n$. As for $\G(1,n)$, the Grassmannian $\G(k,n)$ can be embedded into the projective space $\PP^{N_k}$, where $N_k=\binom{k+1}{n+1}-1$. Therefore for a $k-$dimensional linear subspace, $K$, of $\PP^n$, we shall write $[K]$ for the corresponding projective point in $\PP^{N_k}$. 

\begin{defin}
Let $X \subset \PP^n$ be an irreducible variety. We define the following variety of incident lines:
$$
\De(X) = \{l \in \G(1,n) \vtl l \cap X \neq \emptyset \}.
$$
\end{defin}

The codimension $c$ of $X$ and the dimension of $\De(X)$ are related by the following lemma.

\begin{lemma}
\label{lemma::delta_dim}
Let $X \subset \PP^n$ be an irreducible closed variety of codimension $c \geq 2$. Then $\De(X)$ is an irreducible variety of $\G(1,n)$ of dimension $2n-1-c$.
\end{lemma}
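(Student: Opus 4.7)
The plan is to prove both irreducibility and the dimension formula simultaneously via the standard flag construction. I would introduce the incidence variety
\[
I \;=\; \{\,(x, l) \in X \times \G(1,n) \;:\; x \in l\,\}
\]
together with its two natural projections $\pi_1 : I \to X$ and $\pi_2 : I \to \G(1,n)$, whose image is exactly $\Delta(X)$. The idea is to read off the dimension and irreducibility of $I$ from $\pi_1$, and then push these two properties down to $\Delta(X)$ through $\pi_2$.

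First I would analyse $\pi_1$. For any $x \in \PP^n$, the set of lines of $\PP^n$ through $x$ is a copy of $\PP^{n-1}$, so the fibres of $\pi_1$ are all irreducible of the same dimension $n-1$; in fact $I$ is naturally a $\PP^{n-1}$-bundle over $X$. Since $X$ is irreducible, it follows immediately that $I$ is irreducible and that
\[
\dim I \;=\; \dim X + (n-1) \;=\; (n-c)+(n-1) \;=\; 2n-1-c.
\]

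For the second step I would turn to $\pi_2$. Its image is by construction $\Delta(X)$, and the image of an irreducible variety under a morphism is irreducible, so $\Delta(X)$ is irreducible. The fibre $\pi_2^{-1}(l)$ is naturally identified with $l\cap X$, so the equality $\dim\Delta(X) = \dim I$ will follow from the theorem on dimension of fibres as soon as one shows that for a generic $l \in \Delta(X)$ the intersection $l \cap X$ is zero-dimensional, i.e.\ that $l$ is not contained in $X$. This is the one point that deserves care, and it is where the hypothesis $c \geq 2$ plays its role. I would argue as follows: pick a smooth point $x \in X$; any line through $x$ contained in $X$ has tangent direction lying in $T_xX$, so the lines through $x$ that lie in $X$ form a subvariety of $\PP(T_x\PP^n) \iso \PP^{n-1}$ of dimension at most $\dim X - 1 = n-c-1$, which is strictly smaller than $n-1$. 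Consequently the open subset of $I$ made of pairs $(x,l)$ with $x$ smooth in $X$ and $l \not\subset X$ is dense in $I$; on this locus $\pi_2$ has $0$-dimensional fibres, and the fibre-dimension theorem yields $\dim\Delta(X) = \dim I = 2n-1-c$, which completes the proof.
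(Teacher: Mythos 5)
Your proof is correct. The paper does not actually supply a proof of this lemma (it defers to \cite{Kaminski-06}), but your incidence-correspondence argument is the standard one and mirrors the technique the authors themselves use in Lemma~\ref{lemma::fiber}: the only delicate point --- that a generic line meeting $X$ is not contained in $X$, so that $\pi_2$ has finite generic fibre --- is handled correctly by your tangent-space bound $n-c-1 < n-1$ at a smooth point.
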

This lemma is quite a standard. A proof can be found in our paper on trisecant lemma for non-equidimensional varieties \cite{Kaminski-06}. 

%% \begin{proof}
%% Consider the following incidence variety $\Si = \{(l,p) \in \G(1,n) \times X \vtl p \in l\} \subset \De(X) \times X$, endowed with the canonical projections $\pi_1: \Si \lra \De(X)$ and $\pi_2: \Si \lra X$. The generic fiber of $\pi_1$ is finite (otherwise it is clear that $X = \PP^n$). Thus $\dim(\Si) = \dim(\De(X))$. For all $p \in X$, the fiber $\pi_2^{-1}(p)$ is isomorphic to $\PP^{n-1}$ and has dimension $n-1$. Therefore $\Si$ is irreducible and has dimension $n-c+n-1 = 2n-c-1$ as shown in \cite{Harris-92,Shafarevich-77}. Since $\pi_1$ is surjective and continuous (in Zariski topology), $\De(X)$ is also irreducible and has dimension $2n-c-1$.
%% \end{proof}

The following simple result will be useful in the sequel.

\begin{lemma}
\label{lemma::no_delta_inclusion}
Let $X_1$ and $X_2$ be two irreducible closed varieties in $\PP^n$ of codimension greater or equal to $2$. Then $\De(X_1) \not\subset \De(X_2)$ unless $X_1 \subset X_2$. 
\end{lemma}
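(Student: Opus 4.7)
The plan is to prove the contrapositive: assuming $X_1 \not\subset X_2$, I will exhibit a line $l$ in $\De(X_1) \setminus \De(X_2)$. This is a dimension-counting argument on the family of lines through a well-chosen point.

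First I would pick a point $p \in X_1 \setminus X_2$, which exists by the assumption $X_1 \not\subset X_2$ (and because both are irreducible, hence nonempty). Note crucially that $p \notin X_2$, which will be used in the dimension count below.

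Next, I would consider the linear $\PP^{n-1}$ parametrizing all lines in $\PP^n$ passing through $p$. Inside this $\PP^{n-1}$, the locus of lines that also meet $X_2$ is precisely the image of $X_2$ under the projection from $p$ (valid because $p \notin X_2$), so it has dimension exactly $\dim(X_2) = n - \codim(X_2) \leq n-2$. Since $n-2 < n-1 = \dim \PP^{n-1}$, this locus is a proper closed subvariety of the space of lines through $p$, so a generic line $l$ through $p$ avoids $X_2$. Such an $l$ lies in $\De(X_1)$ because it contains the point $p \in X_1$, but $l \notin \De(X_2)$, contradicting $\De(X_1) \subset \De(X_2)$.

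The argument is essentially immediate once the correct parameter space (lines through a fixed point in $X_1 \setminus X_2$) is chosen, so there is no serious obstacle. The only point requiring care is the hypothesis $\codim X_2 \geq 2$: if $X_2$ were a hypersurface, the cone of lines through $p$ meeting $X_2$ would fill all of $\PP^{n-1}$ and the dimension count would fail. This is exactly where the codimension assumption is used, and it shows that the lemma cannot be weakened in this direction.
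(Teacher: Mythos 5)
Your proof is correct: the projection-from-$p$ dimension count cleanly produces a line through $p\in X_1\setminus X_2$ missing $X_2$, and your remark that only $\codim X_2\ge 2$ is actually needed is accurate. The paper itself omits the proof and defers to \cite{Kaminski-06}, where the argument is the same standard one, so there is nothing further to compare.
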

A proof of this lemma can also be found in our previous paper \cite{Kaminski-06}.

%% \begin{proof}
%% Assume that $\De(X_1) \subset \De(X_2)$ and $X_1 \not\subset X_2$. Consider a point $p \in X_1 \backslash X_2$. and an hyperplane $H$ not passing through $p$. Consider the projection $\pi: \PP^n \backslash \{p\} \lra H, q \mapsto \overline{qp} \cap H$, which maps a point $q \in \PP^n \backslash \{p\}$ to the point of intersection of the line $\overline{qp}$ with the hyperplane $H$. The projection is surjective and so is $\pi_{|X_2}$, because $\De(X_1) \subset \De(X_2)$. Thus $\dim(X_2) \geq n-1$, which is impossible, because $\codim(X_i) \geq 2$ for each $i$.
%% \end{proof}

%=========================================
\subsection{Join Varieties}

Consider $m < n$ closed irreducible varieties $\{Y_i\}_{i=1,...,m}$ embedded in $\PP^n$, with codimensions $c_i \geq 2$. Consider the {\it join variety}, $J = J(Y_1,...,Y_m) = \De(Y_1) \cap ... \cap \De(Y_m)$, included in $\G(1,n)$. We assume that $\sum_{i=1,...,m} c_i \leq 2n-2+m$, so that $J$ is not empty. We shall first determine the irreducible components of $J$. 

Let $U$ be the open set of $Y_1 \times ... \times Y_m$ defined by $\{(p_1,...,p_m) \in Y_1 \times ... \times Y_m \vtl \exists i \neq j, p_i \neq p_j \}$.  Let $V$ be the locally closed set made of the $m-$tuples in $U$, which points are collinear. Let $s: V \lra \G(1,n)$ be the morphism that maps a $m-$tuple of aligned points to the line they generate. Let $S \subset \G(1,n)$ be the closure of the image of $s$. 

First let us look at the irreducible components of $S$. These components could be classified in several classes according to the number of distinct points in the $m-$tuples that generate them. For example consider the case where $m=3$. The locally closed subset of $Y_1 \times Y_2 \times Y_3$, made of triplets of three distinct and collinear points, generates one component of $S$. Now, if $Y_{12}$ is an irreducible component of $Y_1 \cap Y_2$ not contained in $Y_3$, then the lines generated by a point of $Y_{12} \backslash Y_3$ and another point in $Y_3$ form also an irreducible component of $S$. Also let $Z$ be an irreducible component of $Y_1 \cap Y_2 \cap Y_3$, then the lines generates by a point of $Z$ and another point in $Y_1$ are the intersection of the secant variety of $Y_1$ with $\De(Z)$, and form an irreducible component of $S$ too. In the general case, the following lemma will be enough for our purpose. 

\begin{lemma}
The irreducible components of $J$ are:
\begin{enumerate}
\item $\De(Z)$, where $Z$ runs over all irreducible components of $Y_1 \cap ... \cap Y_m$,
\item the irreducible components of $S$, which are not included in any component of the form $\De(Z)$.
\end{enumerate}
\end{lemma}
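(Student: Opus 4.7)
The plan is to obtain the claimed list of components by first decomposing $J$ set-theoretically as a finite union of closed irreducible subvarieties, and then identifying the maximal ones. First I would establish the set-theoretic identity
$$
J \;=\; S \;\cup\; \bigcup_{Z} \De(Z),
$$
where $Z$ runs over the irreducible components of $Y_1 \cap \dots \cap Y_m$. For the nontrivial inclusion, take $l \in J$ and pick $p_i \in l \cap Y_i$ for each $i$. If not all of the $p_i$ are equal, the tuple $(p_1,\dots,p_m)$ lies in $U \cap V$, so $l = s(p_1,\dots,p_m) \in S$. Otherwise, all $p_i$ coincide at a single point $p \in \bigcap_i Y_i$; then $p$ lies in some irreducible component $Z$ of $\bigcap_i Y_i$, whence $l \in \De(Z)$. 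The reverse inclusions are immediate: $\De(Z) \subset J$ because $Z \subset Y_i$ for all $i$, and $S \subset J$ by construction of $s$.

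Next I would check that each building block is irreducible and closed in $J$. The varieties $\De(Z)$ are irreducible by Lemma~\ref{lemma::delta_dim} when $\codim(Z) \geq 2$ (and equal to $\G(1,n)$ when $\codim(Z) \leq 1$, so still irreducible). The closure $S$ is a closed subset of $\G(1,n)$ by definition, and its decomposition into irreducible components is the usual one.

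The core step is then maximality. Let $W$ be an irreducible component of $J$. Since $W$ is irreducible and contained in the finite union of closed subvarieties above, it must lie in one of them: either $W \subset \De(Z)$ for some $Z$, or $W \subset S$. In the first case, maximality forces $W = \De(Z)$, provided $\De(Z)$ is not itself strictly contained in another member of the list. For this, Lemma~\ref{lemma::no_delta_inclusion} rules out $\De(Z) \subsetneq \De(Z')$ (it would force $Z \subsetneq Z'$, impossible for distinct irreducible components of $\bigcap_i Y_i$), and a dimension/genericity argument rules out $\De(Z) \subset S$. In the second case, $W$ coincides with an irreducible component $T$ of $S$; for $T$ to actually be a component of $J$ one must further require that $T$ is not contained in any $\De(Z)$, which is precisely the restriction in the statement.

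The main obstacle I expect is the verification that $\De(Z) \not\subset S$ when $Z$ is a component of $\bigcap_i Y_i$. I would argue it as follows: fix a generic $p \in Z$; the lines through $p$ form a $\PP^{n-1}$, whereas for each $i$ the lines through $p$ meeting $Y_i \setminus \{p\}$ form a cone of dimension at most $\dim Y_i \leq n-2$ (using the hypothesis $c_i \geq 2$). Hence a generic line through $p$ meets each $Y_i$ only at $p$ itself, so it is not of the form $s(p_1,\dots,p_m)$ with not all $p_i$ equal, i.e.\ it is not in $S$. This shows $\De(Z) \not\subset S$, completing the maximality check and hence the classification.
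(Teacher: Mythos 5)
The paper itself does not prove this lemma (it defers to \cite{Kaminski-06}), so I am judging your argument on its own merits. Your overall architecture is the natural one and most of it is sound: the set-theoretic identity $J = S \cup \bigcup_Z \De(Z)$, the irreducibility of each piece, the reduction of the classification to identifying the maximal elements among finitely many irreducible closed pieces, and the use of Lemma~\ref{lemma::no_delta_inclusion} to rule out $\De(Z) \subset \De(Z')$ for distinct components $Z, Z'$ are all correct.

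The gap is exactly at the step you flagged as the main obstacle, and your resolution of it does not work. You exhibit, for generic $p \in Z$, a generic line $l \in \si_p$ meeting each $Y_i$ only at $p$, and conclude that since $l$ is not of the form $s(p_1,\dots,p_m)$ with the $p_i$ not all equal, ``it is not in $S$.'' But $S$ is defined as the \emph{closure} of the image of $s$, and a Zariski closure generically contains many points outside the image: the classical instance is that tangent lines of a curve lie in the closure of the honest secant lines without being secants themselves. So ``$l \notin s(V)$'' does not yield ``$l \notin S$,'' and the maximality of $\De(Z)$ --- which is genuinely needed, since $\De(Z) \subsetneq T$ for a component $T$ of $S$ would contradict item~1 of the lemma --- is not established. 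A correct argument has to control the closure. One route: pass to the closure $\Gamma$ of the graph of $s$ in $(Y_1\times\dots\times Y_m)\times\G(1,n)$, so that $S$ is the image of $\Gamma$ and every $l \in S$ carries a limiting tuple $(p_1,\dots,p_m)$ with $p_i \in l \cap Y_i$; then split $\pi_2^{-1}(\De(Z)) \cap \Gamma$ according to whether the limiting tuple is constant or not. The non-constant locus maps into $\bigcup_i \overline{\{\,\overline{zq} : z \in Z,\ q \in Y_i \setminus \{z\}\,\}}$, which your cone bound does control (dimension at most $\dim Z + \dim Y_i < \dim \De(Z)$); the constant locus lies over the small diagonal, which is a \emph{proper} closed subset of the closure of $V$, and one must bound the dimension of that exceptional fibre separately. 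That second estimate is where the real work lies, and it is absent from your proposal.
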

This lemma was previously introduced in our paper \cite{Kaminski-06}. We refer to it for a proof.
%% \begin{proof}
%% These sets are all irreducible closed subsets of $J$. There is a finite number of such sets and their union covers $J$. Thus the irreducible components of $J$ are certainly some of these sets. 
%% 
%% For every irreducible component, $Z$, of $Y_1 \cap ... \cap Y_m$, $\De(Z) \not\subset S$ (otherwise by proceeding similarly than in lemma~\ref{lemma::no_delta_inclusion}, we have $\dim(Y_1 \cup ... \cup Y_m) \geq n-1$). Now by lemma~\ref{lemma::no_delta_inclusion}, $\De(Z_1) \not\subset \De(Z_2)$ for any two irreducible components, $Z_1$ and $Z_2$, of $Y_1 \cap ... \cap Y_m$. Therefore $\De(Z)$ is a maximal irreducible closed subset of $J$ for every irreducible component $Z$ of $Y_1 \cap ... \cap Y_m$. 
%% 
%% Every irreducible component $S_1$ of $S$, not included in any component of the form $\De(Z)$, is also a maximal irreducible closed subset of $J$. 
%% \end{proof}

For simplicity, we shall call the irreducible components of $S$ {\it joining components} of $J$ and components of the form $\De(Z)$ for some irreducible component $Z$ of $Y_1 \cap ... \cap Y_m$, {\it intersection components}. 

Before closing this section, we shall clarify an important matter of terminology.  For this purpose and throughout the paper, we use the following notations. If $X$ is a projective subvariety of $\PP^n$, we shall write $T_p(X)$ for the projective embedded tangent space of $X$ at $p$. The Zariski tangent space is denoted $\Tht_p(X)$. Let $CX$ be the affine cone over $X$, then $T_p(X)$ is the projective space of one-dimensional subspaces of $\Tht_q(CX)$, where $q \in \A^{n+1}$ is any point lying over $p$. Hence for a morphism $f$ between two projective varieties $X$ and $Y$, which can be also be viewed as a morphism between $CX$ and $CY$, the differential  $df_p: T_p(X)\backslash \PP(\ker(\phi)) \lra T_{f(p)}(Y)$ is induced by the differential $\phi$ between the Zariski tangent spaces $ \Tht_q(CX)$ and $\Tht_{f(q)}(CY)$. For simplicity, we shall write: $df_p: T_p(X) \lra T_{f(p)}(Y)$, while it is understood that $df_p$ might be defined on a proper subset of $T_p(X)$.

Eventually, we quote a theorem that we shall use several times in the sequel. 
\begin{theo}
\label{theo::ran}
For a projective variety $X \subset \PP^n$ (possibly singular and/or reducible), the variety of $(d+2)-$secants of $X$, where $d=\dim(X)$, always fills up at most a $(d+1)-$fold. 
\end{theo}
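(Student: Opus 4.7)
\bigskip

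\noindent\textbf{Proof proposal.}
The plan is to reduce the statement to the dimension bound $\dim S \leq d$, where $S \subset \G(1,n)$ is the variety of $(d+2)$-secants. Once this is proved, the union $V$ of all $(d+2)$-secants is the image in $\PP^n$ of the incidence $\{(p,\ell) : p \in \ell,\ \ell \in S\} \subset \PP^n \times S$, which has dimension $\dim S + 1$, so that $\dim V \leq d+1$ as required. Irreducible components of $S$ whose general member is a line contained in $X$ sweep out at most $X$ itself and thus contribute only a $d$-dimensional piece to $V$, so I may restrict attention to an irreducible component $S_0$ of $S$ whose generic member is not contained in $X$.

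The heart of the argument is then an infinitesimal computation at a general $\ell \in S_0$. Choose distinct intersection points $p_1,\ldots,p_{d+2} \in \ell \cap X$ at which $X$ is smooth (both being generic conditions in characteristic zero). Using the standard identification
$$ T_\ell \G(1,n) \;\iso\; H^0\!\bigl(\ell, N_{\ell/\PP^n}\bigr) \;\iso\; H^0\!\bigl(\PP^1, \mathcal O(1)^{\oplus (n-1)}\bigr), $$
of dimension $2(n-1)$, a tangent vector $v \in T_\ell S_0$ corresponds to an infinitesimal deformation of $\ell$ along which each $p_i$ admits an infinitesimal displacement keeping it on $X$. This translates into the requirement that the value $v(p_i) \in T_{p_i}\PP^n / T_{p_i}\ell$ lie in $(T_{p_i} X + T_{p_i}\ell)/T_{p_i}\ell$, a linear subspace of codimension $n-1-d$ in the $(n-1)$-dimensional fiber at $p_i$. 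Since a section of $\mathcal O(1)^{\oplus(n-1)}$ on $\PP^1$ is determined by its values at any two distinct points, the combined evaluation map at $p_1,\ldots,p_{d+2}$ is injective.

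The dimension bound $\dim T_\ell S_0 \leq d$ therefore follows if the composed map
$$ T_\ell \G(1,n) \;\lra\; \bigoplus_{i=1}^{d+2} T_{p_i}\PP^n \big/ \bigl(T_{p_i} X + T_{p_i}\ell\bigr) $$
has rank at least $2(n-1) - d$ for a general $\ell \in S_0$. The main obstacle is exactly this rank estimate: for special configurations of the tangent spaces $T_{p_i} X$ along $\ell$ (e.g.\ many of them containing a common direction transverse to $\ell$) the naive count collapses. To handle this, I would exploit the genericity of $\ell$ in $S_0$ together with the monodromy action on the unordered tuple $(p_1,\ldots,p_{d+2})$ as $\ell$ varies in $S_0$, and use characteristic-zero generic smoothness to rule out the degenerate configurations; this is the step that genuinely requires the standing hypothesis $\mathrm{char}\,k = 0$, in line with the Mumford-style counterexamples in positive characteristic recalled earlier in the introduction. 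Granting this, $\dim S_0 \leq \dim T_\ell S_0 \leq d$, and summing over components yields $\dim S \leq d$ and hence $\dim V \leq d+1$.
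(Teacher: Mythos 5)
First, note that the paper does not prove Theorem~\ref{theo::ran} at all: it is stated as an imported result, with the proof deferred to \cite{Ran-91}. So the only question is whether your sketch could stand on its own, and it cannot, for two reasons.

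The first is the one you flag yourself: the rank estimate for the composed evaluation map is asserted, not proved, and ``monodromy plus generic smoothness'' is a hope rather than an argument. The second is more serious: your reduction to the bound $\dim S \leq d$ is a reduction to a false statement. Take $X$ a smooth plane cubic in a plane $\PP^2 \subset \PP^3$, so $d=1$ and $(d+2)$-secants are trisecants. Every general line of that $\PP^2$ is a trisecant, so the variety of trisecants has dimension $2 > d$ --- this is precisely the exceptional case of the classical trisecant lemma recalled in the paper's own introduction. Ran's theorem still holds there because the union of these lines is only the plane itself: the fibers of the incidence correspondence over the union are positive-dimensional and absorb the excess, so $\dim V = \dim S + 1 - 1 = d+1$. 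Concretely, in your infinitesimal computation the $d+2$ linear conditions at the points $p_i$ are genuinely dependent in this case (the rank-one subsheaf of $N_{\ell/\PP^n}$ cut out by the plane forces the third condition once the first two hold), so the rank you need is simply not attained, and no genericity or monodromy argument can restore it since the configuration is generic in $S_0$. Any correct proof must bound the dimension of the union $\bigcup_{\ell \in S} \ell$ directly, allowing $\dim S > d$ with compensating fiber dimension; this is what Ran's argument does, and it is not recoverable from a naive tangent-space count at a general secant line.
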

A broader version and a proof of this theorem were introduced in~\cite{Ran-91}.

%=========================================
%=========================================
\section{Multi-Secant Lemma}
\label{sec::core}

Before we proceed, we need to prove a few preliminary results. Despite these results are rather known, we include them in the paper for the sake of completeness. The following proposition also illustrates the techniques we use in the paper. It can be viewed as a generalization of a well-known result of Samuel,~\cite{Hartshorne-77} page 312, which deals with smooth curves.

\begin{prop}
\label{prop::pencil}
Let $X$ be an irreducible closed subvariety of $\PP^n$ of dimension $k$. If there exists $L \in \G(k-1,n)$, such that for all  points $p \in U_0$, where $U_0$ is a dense open set of $X$, $L \subset T_p(X)$, then $X$ is a $k-$dimensional linear space, containing $L$.
\end{prop}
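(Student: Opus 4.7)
The plan is to exploit the linear projection $\pi_L: \PP^n \dashrightarrow \PP^{n-k}$ from the center $L$, and show that the hypothesis forces $\pi_L|_X$ to be constant, from which we deduce that $X$ is contained in a single $k$-dimensional fiber-plus-center of the projection.

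First I would set up the projection. Since $\dim X = k > k-1 = \dim L$ and $X$ is irreducible, $X \not\subset L$, so $\pi_L$ restricts to a rational map $\pi_L|_X: X \dashrightarrow \PP^{n-k}$ defined on the dense open subset $X \setminus L$. Replace $U_0$ by the dense open $U_0 \cap X_{\text{smooth}} \setminus L$, so that $L \subset T_p(X)$ and $\pi_L$ is regular at $p$ for every $p$ in this set.

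The crucial step is the differential computation. Fix such a generic point $p$. The projective tangent space $T_p(X)$ has dimension $k$ and contains both the point $p$ and the $(k-1)$-plane $L$; since $p \notin L$, this forces $T_p(X) = \langle L, p \rangle$, the $k$-plane spanned by $L$ and $p$. By definition, $\pi_L$ collapses the entire punctured plane $\langle L, p \rangle \setminus L$ to the single point $\pi_L(p)$. Translating to affine cones (as explained in the paragraph on tangent spaces preceding Theorem~\ref{theo::ran}): the differential of $\pi_L$ at a lift $q$ of $p$ has kernel $CL$, and $CL$ already contains the $k$-dimensional subspace $\Tht_q(CX) \cap CL$; a dimension count shows $CL \subset \Tht_q(CX)$ and the induced map on projective tangent spaces is zero. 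Hence $d(\pi_L|_X)_p = 0$ on a dense open subset of $X$.

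Since $X$ is irreducible and the differential of $\pi_L|_X$ vanishes on a dense open set of smooth points, the image $\pi_L(X)$ is zero-dimensional, hence a single point $[v] \in \PP^{n-k}$. The closure $M := \pi_L^{-1}([v]) \cup L$ is a $k$-dimensional linear subspace of $\PP^n$ containing $L$, and by construction $X \setminus L \subset M$. Taking Zariski closures, and using that $X \setminus L$ is dense in the irreducible $X$, we get $X \subset M$. Since $X$ and $M$ are both irreducible closed of dimension $k$, we conclude $X = M$, a $k$-dimensional linear subspace containing $L$.

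The only genuine subtlety I expect is the bookkeeping between projective and affine (cone) tangent spaces when arguing that the differential of $\pi_L$ vanishes on $T_p(X)$; the rest is a routine application of the fact that a rational map between irreducible varieties whose differential is generically zero must factor through a point.
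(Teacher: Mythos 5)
Your proof is correct, and it is essentially the approach this paper takes: the paper defers the proof of Proposition~\ref{prop::pencil} to \cite{Kaminski-06}, but its proof of the companion Proposition~\ref{prop::planar} uses exactly your argument, since the join map $p \mapsto p \join L$ into $\si_L \iso \PP^{n-k}$ is canonically the projection $\pi_L$ from $L$, and both proofs reduce to showing the differential vanishes generically and invoking characteristic zero (generic smoothness) to conclude the image is a point.
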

A proof can be found in our paper \cite{Kaminski-06}. 

Note that this fact does not hold in positive characteristic as the following example shows. Consider the curve in $\PP^3$, over a field $K$ of characteristic $p$, defined by the ideal 
$$
<\!\!y^p-zt^{p-1},x^p-yt^{p-1}\!\!> \subset K[x,y,z,t],
$$ 
with $t=0$ being the plane at infinity. The tangent space at $(x_0,y_0,z_0,t_0)$ is given by the following system of linear equations: 
$$
\left \{ \begin{array}{l}
t_0^{p-1}z+(p-1)z_0t_0^{p-2}t=0 \\ 
t_0^{p-1}y+(p-1)y_0t_0^{p-2}t=0
\end{array} \right.
$$ 
Every two tangent spaces are parallel and therefore they all contain the same point at infinity. However the the curve is not a line. Note that the point $(0,0,1,0)$ is a singular point of the curve.

The next proposition is used throughout the paper several times. The underlying idea is the following. Let $L$ be a $k-$dimensional linear space. If the tangent space to an irreducible variety at a generic point always spans with $L$ a $(k+1)-$dimensional linear space, then the variety itself must be included into a $(k+1)-$dimensional linear space, containing $L$.

\begin{prop}
\label{prop::planar}
Let $X$ be an irreducible closed subset of $\PP^n$, with $\dim(X) = r$. If there exists $L \in \G(k,n)$, such that for all points $p \in U_0$, where $U_0$ is dense open set of $X$, $\dim(L \cap T_p(X)) \geq r-1 $, then $X$ is included in a $(k+1)-$dimensional linear space, containing $L$.
\end{prop}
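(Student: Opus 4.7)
The approach I would take is to project $X$ away from $L$ and use the tangency hypothesis to force the image to be zero-dimensional. If $X \subset L$ there is nothing to prove, so I would first dispose of that case, and then assume $X \not\subset L$, in which case $X \setminus L$ is dense and open in $X$. Let $\pi_L : \PP^n \dra \PP^{n-k-1}$ be the linear projection from $L$, induced by a surjective linear map $\wtilde{\pi}_L : \C^{n+1} \ra \C^{n-k}$ whose kernel is the affine cone $CL$ over $L$. My plan is to compute the generic rank of the rational map $\pi_L|_X : X \dra \PP^{n-k-1}$ by analyzing its differential on affine cones, following the convention set up at the end of Section~\ref{sec::background}.

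The key tangential computation runs as follows. At a generic smooth point $p \in U_0$ lying outside $L$, with $q \in CX$ over $p$, the differential of $\wtilde{\pi}_L$ restricted to $\Tht_q(CX)$ is simply the restriction of the linear map $\wtilde{\pi}_L$, so its kernel is $CL \cap \Tht_q(CX)$, which projectivizes to $L \cap T_p(X)$. By hypothesis this kernel has projective dimension at least $r-1$, hence affine dimension at least $r$. Since $\dim_\C \Tht_q(CX) = r+1$, the differential has affine rank at most $1$. The rank is also at least $1$, because $q \in \Tht_q(CX)$ (as $CX$ is a cone) and $\wtilde{\pi}_L(q) \neq 0$ (as $p \notin L$). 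Hence the rank is exactly $1$, and by the generic rank theorem, $\pi_L(X)$ has projective dimension $0$, i.e.\ it is a single point $\star \in \PP^{n-k-1}$.

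The conclusion then follows immediately: $X$ is contained in the closure of the fiber $\pi_L^{-1}(\star)$, and for any $p \in X \setminus L$ with $\pi_L(p) = \star$ this fiber equals $\langle L, p\rangle$, a $(k+1)$-dimensional linear space containing $L$.

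The main subtlety I anticipate is the careful translation between affine and projective dimensions through the cone formalism --- specifically, verifying that affine rank $1$ for the differential of a morphism of cones forces the projective image to be a single point rather than a projective line. This depends on the generic rank theorem in characteristic zero together with the observation that $\pi_L$ is linear, so $\pi_L(CX)$ is again a cone, and an affine-dimension-$1$ cone is a line through the origin, which projectivizes to a single point. The extreme case $\dim(L \cap T_p(X)) = r$ (i.e.\ $T_p(X) \subset L$ generically) is covered by the same rank bound, so no separate argument is needed.
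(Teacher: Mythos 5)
Your proof is correct and is essentially the paper's own argument in different clothing: the paper's map $f\colon p \mapsto p \join L$ into the family $\sigma_L$ of $(k+1)$-planes containing $L$ is canonically the linear projection $\pi_L$ from $L$ (identify $\sigma_L$ with $\PP^{n-k-1}$), and both proofs conclude via the characteristic-zero generic smoothness/generic rank theorem that the image is a single point, whose fiber closure is the desired $(k+1)$-plane. Your kernel computation on affine cones ($\ker d\pi_L|_{\Tht_q(CX)} = CL \cap \Tht_q(CX)$, forcing rank $\leq 1$) is the dual of the paper's observation that $T_p(X) \subset p \join L$ forces $df_p$ to be constant, and is if anything the more carefully justified of the two.
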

We initially introduced this lemma in our previous paper~\cite{Kaminski-06}. However, since it is a major importance for the sequel, we present here a proof for the reader's convience. 
\begin{proof}
If $X \subset L$, then there is nothing to prove. Similarly if $dim(X) = 0$, the result is obvious. Therefore let us assume that $X \not\subset L$ and $dim(X) = r > 0$. Let $\si_L \subset \G(k+1,n)$ be the set of $(k+1)-$dimensional linear spaces that contains $L$. Consider the rational map: $f: X \dra \si_L, p \mapsto p \join L$, where $\join$ is the join operator \cite{Barnabei-all-85}, equivalent to the classical exterior product \footnote{As in \cite{Barnabei-all-85}, the departure from the classical notation is amply justified by the geometric meaning on the operator.}. This mapping is defined over the open set $U$ of regular points in $(X \backslash L) \cap U_0$. Each such point is mapped to the $(k+1)-$dimensional space generated by $p$ and $L$. Since $\dim(T_p(X) \cap L) = r-1$, we have the following inclusion $T_p(X) \subset p \join L = f(p)$, for $p \in U$. 
Let $Y$ be the closure of $f(U)$ in $\si_L$. Thus $Y$ is irreducible. 

Since the ground field is assumed to have characteristic zero, there exists a dense open set $V$ of $X$ such that for any point $p$ in $V$, the differential $df_p$ is surjective,~\cite{Hartshorne-77} page 271. 

This differential is simply: $df_p: T_p(X) \lra T_{f(p)}(Y), a \mapsto a \join L$. Since $T_p(X) \subset p \join L$, $df_p$ is constant over $T_p(X) \backslash L$ and takes the value $p \join L = df_p(p)$. Thus $\dim(Y) = 0$. Since $Y$ is irreducible, $Y$ is a single point corresponding to a $(k+1)-$dimensional linear space, say $K$, containing $L$. Therefore $X \subset K$.
\end{proof}

This proposition does not hold in positive characteristic. Indeed over a field of characteristic $p$, for the curve in $\PP^3$ defined by the following ideal:\\ $<\!\!yt^{p-1}-x^p,zt^{p^2-1}-x^{p^2}\!\!>$, all the tangent lines are parallel and therefore intersect in some point at infinity. But the curve is not a line.

%=============================================
\subsection{The Main Result}
\label{sec::main_result}

Consider now irreducible, distinct closed varieties $Y_1,...,Y_m$,  each of dimension $n-2$, embedded in $\PP^n$. Higher codimension will be considered below. Let $S$ be a join component of $J(Y_1,...,Y_m)$. We assume the following condition (i.e. Condition 1 in the Multi-secant Lemma):
\begin{equation}
\label{full_cond}\tag{$\natural$} 
\mbox{{\it For all $i$ and for all $p \in Y_i$, there exists a line $l \in S$, such that $p \in l$.}}
\end{equation}  

We shall prove that if there exists an additional irreducible variety $Y$, of dimension $n-2$, such that $S \subset \De(Y)$, then there is an hyperplane that contains the varieties $Y_1,...,Y_m,Y$.  We proceed in several steps. First observe that $\dim(S) \geq 2n-2-m$. As a matter of fact, in the sequel the notation $\si_p$ is used for the set of lines passing through $p$, and $X_p = S \cap \si_p$.

\begin{lemma}
\label{lemma::fiber}
Consider the variety $W = \bigcup_{l \in S} l$. Then $W$ is an irreducible variety, that strictly contains $Y_i$ for all $i$. Hence it has dimension either $n-1$ or $n$. If $\dim(W) = n-1$, the following facts hold:
\begin{enumerate}
\item For a generic point $p \in W$, we have $\mu = \dim(\si_p \cap S) = 
\dim(S) - n +2 \geq n-m$.

\item For $i=1,...,m$, let $\mu_i = \min_{p \in Y_i} \dim(\si_p \cap S)$ be the dimension of $\si_p \cap S$ for a generic point $p$ of $Y_i$. Then $\mu_1 = \mu_2 = ... = \mu_m \geq n-m$.

\item The variety $W_1 =  \{p \in \PP^n \vtl \dim(\si_p \cap S) \geq \mu+1\}$ has dimension at most $n-3$.
\end{enumerate}
\end{lemma}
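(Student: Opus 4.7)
My plan is to study the incidence variety
$$
I := \{(p,l)\in \PP^n\times S : p\in l\}.
$$
Projecting to $S$ exhibits $I$ as a $\PP^1$-bundle over the irreducible base $S$, so $I$ is irreducible of dimension $\dim S + 1$. Let $\pi:I\to\PP^n$ be the second projection; then $W=\pi(I)$ is automatically irreducible. Condition~1 (every $p\in Y_i$ lies on some $l\in S$) forces $Y_i\subset W$ for every $i$. To show the containment is strict, I argue by contradiction: if $W=Y_i$, then $Y_j\subset W=Y_i$ for every $j$, and since both are irreducible of the same dimension $n-2$ this gives $Y_j=Y_i$, violating distinctness of the $Y_j$'s. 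Combined with $W\subset\PP^n$, this yields $\dim W\in\{n-1,n\}$.

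Assume henceforth $\dim W=n-1$. Claim~1 is just the fibre dimension theorem applied to the surjection $\pi:I\to W$ of irreducible varieties. The fibre over a generic $p\in W$ is exactly $\si_p\cap S = X_p$, hence
$$
\mu = \dim X_p = \dim I - \dim W = (\dim S + 1) - (n-1) = \dim S - n + 2.
$$
The standard lower bound $\dim S\geq 2n-2-m$ (since $S$ is a component of the intersection of the $m$ hypersurfaces $\De(Y_i)$ in $\G(1,n)$) then gives $\mu\geq n-m$.

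For claim~2 the key is to compare $I_i := \pi^{-1}(Y_i)$ with $I$. Because $\dim Y_i = n-2 < n-1 = \dim W$, $Y_i$ is a proper subvariety of $W$, so $I_i$ is a proper closed subset of the irreducible variety $I$, which gives $\dim I_i \leq \dim I - 1 = \dim S$. On the other hand, by condition~1 the restriction $\pi|_{I_i}:I_i\to Y_i$ is surjective; since $Y_i$ is irreducible, some irreducible component of $I_i$ dominates $Y_i$, and on that component the generic fibre dimension coincides with $\mu_i$, giving $\dim I_i \geq \dim Y_i + \mu_i = n-2+\mu_i$. Combining, $\mu_i \leq \dim S - n + 2 = \mu$. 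The reverse inequality $\mu_i \geq \mu$ is automatic because $\mu$ is the global minimum fibre dimension on $W\supset Y_i$. Hence $\mu_1=\cdots=\mu_m=\mu\geq n-m$.

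For claim~3 I would use upper semicontinuity of fibre dimension: $W_1$ is closed in $W$, and it is a proper subset since $\mu$ is already realised on a dense open. Then $\pi^{-1}(W_1)$ is a proper closed subset of the irreducible $I$, so has dimension at most $\dim I - 1 = \dim S$. Treating each irreducible component $W_1^{(k)}$ separately, every fibre of $\pi^{-1}(W_1^{(k)})\to W_1^{(k)}$ has dimension $\geq\mu+1$, so some component of the preimage has dimension at least $\dim W_1^{(k)} + \mu + 1$. Combining the two bounds yields
$$
\dim W_1^{(k)} \leq \dim S - \mu - 1 = (n-2)-1 = n-3,
$$
hence $\dim W_1 \leq n-3$. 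The main obstacle I anticipate is the book-keeping in claim~2: $I_i$ need not be irreducible even though $I$ is, so the lower bound $\dim I_i \geq \dim Y_i + \mu_i$ must be extracted from a suitably chosen dominating component, whose existence hinges on condition~1 together with the irreducibility of $Y_i$. Every other step is a fairly direct application of upper semicontinuity and the fibre dimension theorem.
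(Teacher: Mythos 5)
Your proof is correct and follows essentially the same route as the paper: the incidence variety $\Si=\{(l,p)\in S\times\PP^n : p\in l\}$ viewed as a $\PP^1$-bundle over $S$, followed by the fibre-dimension theorem and upper semicontinuity for each of the three claims. The only differences are cosmetic (you bound $\dim\pi^{-1}(Y_i)$ by properness inside the irreducible $\Si$ where the paper uses finiteness of the general fibre of the projection to $S$, and you compute the bound on $\dim W_1$ directly where the paper argues by contradiction), so no further comparison is needed.
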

\begin{proof}
\begin{enumerate}
\item  Let us consider the following incidence variety:
$$
\Si = \{(l,p) \in S \times W \vtl p \in l\}
$$
endowed with the two canonical projections $\pi_1:\Si \lra S$ and $\pi_2: \Si \lra W$.

For all $l \in S$, the fiber $\pi_1^{-1}(l)$ is irreducible of dimension $1$. Therefore $\Si$ is irreducible and $\dim(\Si) = \dim(S)+1$. Therefore $W$ is also irreducible. Then the set $W$ is an irreducible closed subset of $\PP^n$ which strictly contains each $Y_i$ (otherwise $Y_i = W$). Therefore $\dim(W) \geq n-1$. Note that $W = \pi_2(\pi_1^{-1}(S))$. We have $\mu = \min_{p \in W} \dim(\pi_2^{-1}(p))$.  If $\dim(W) = n-1$, then $\mu = \dim(S)+1-n+1 = \dim(S) - n+2 \geq 2n-2-m - n +2 = n-m$.

\item  If we consider the incidence variety $\Si_i$, defined similarly than $\Si$, except that $W$ is replaced by $Y_i$, then the general fiber of $\pi_1$ is finite (otherwise $Y_i = W$). Thus $\dim(\Si_i) = \dim(S)$ and condition~(\ref{full_cond}) implies that the general fiber of $\pi_2$ has dimension $\dim(S) - n+2 \geq 2n-2-m-n+2 = n-m$.

\item Then $W_1$ is a proper closed subset of $W$. Thus $\dim(W_1) \leq n-2$. If $\dim(W_1) = n-2$, then $\dim(\pi_2^{-1}(W_1)) = n-2 + \mu+1 \geq n-2+n-m+1 = 2n-1+m$, implying that $\pi_2^{-1}(W_1) = \Si$ and so that $W_1 = W$. As a consequence $\dim(W_1) \leq n-3$.
\end{enumerate}
\end{proof}

In addition to condition~(\ref{full_cond}), we need to assume the following {\it strong connectivity}. 

\begin{defin}
\label{def::strong_connectivity}
We shall say that $S$ is {\it strongly connected} if for two lines $l_1,l_2 \in S$, there exists a finite sequence $((p_1,u_1), \hdots, (p_n,u_n))$ that satisfies the following four conditions: 
\begin{enumerate}
\item $\forall i, p_i \in Y_1 \cup \hdots \cup Y_m, u_i \in S$,
\item $u_1 =l_1, u_n = l_2$,
\item $u_i \in X_{p_i} \cap X_{p_{i+1}}$ for $i=1,...,n-1$,
\item $u_i$ and $u_{i+1}$ belongs to same irreducible component of $X_{p_{i+1}}$ for $i=1,...,n-1$.
\end{enumerate}
\end{defin}
This condition implies immediately the following lemma. {\it We shall denote $X_p(\La)$ the union of irreducible components of $X_p$ that intersect a subset $\La$ of $S$.} 

\begin{lemma}
\label{lemma::strong_connectivity}
Consider a line $l_0 \in S$ and $\Lambda_0 = \{l_0\}$. Let $\Gamma_1$ be the variety $\Gamma_1 = l_0 \cap (\cup_{i=1}^m Y_i)$. For each $p \in \Gamma_1$, consider $X_p(\La_0)$, the union of irreducible components of $X_p = S \cap \si_p$ that contain $l_0$. Let $\La_1 = \cup_{p \in \Ga_1} X_p(\La_0)$. Let $\Ga_2 = (\cup_{l \in \La_1} l) \cap (\cup_{i=1}^m Y_i)$. Then define $\La_2 = \cup_{p \in \Ga_2} X_p(\La_1) $. More generally, assume $\La_k$ is defined. Let $\Ga_{k+1}$ be $(\cup_{l \in \La_k} l) \cap (\cup_{i=1}^m Y_i)$ and $\La_{k+1} = \cup_{p \in \Ga_k} X_p(\La_k) $. Then if $S$ is strongly connected, there exists $k_0$ such that $\La_{k_0} = S$.
\end{lemma}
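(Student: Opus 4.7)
The strategy is to combine a reachability induction with Noetherianity of $S$. The key inductive claim is: every line $u_n$ terminating a valid strong-connectivity chain $((p_1,u_1),\ldots,(p_n,u_n))$ with $u_1=l_0$ lies in $\La_{n-1}$. The base case $n=1$ is the definition $\La_0=\{l_0\}$. For the step, the hypothesis gives $u_n\in\La_{n-1}$; by condition 3 of Definition~\ref{def::strong_connectivity} we have $p_{n+1}\in u_n\subset\cup_{l\in\La_{n-1}}l$, which combined with $p_{n+1}\in\cup_i Y_i$ places $p_{n+1}\in\Ga_n$; by condition 4, $u_{n+1}$ lies in the irreducible component of $X_{p_{n+1}}$ containing $u_n$, a component which meets $\La_{n-1}$ (it contains $u_n$), so $u_{n+1}\in X_{p_{n+1}}(\La_{n-1})\subset\La_n$. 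Applying the claim to a chain joining $l_0$ to an arbitrary $l\in S$ yields $\bigcup_k\La_k=S$.

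To upgrade this set-theoretic equality to $\La_{k_0}=S$ for a single $k_0$, I would show by induction that each $\La_k$ is Zariski-closed in $S$. Granting $\La_k$ closed, $\bigcup_{l\in\La_k}l\subset\PP^n$ is closed as the projection of the proper line-incidence variety over $\La_k$, so $\Ga_{k+1}=(\bigcup_{l\in\La_k}l)\cap(\cup_i Y_i)$ is closed. For the component-selection step, consider the closed incidence $\Theta_{k+1}=\{(p,l)\in\Ga_{k+1}\times S:\, p\in l\}$, decompose it into its finitely many irreducible components $T_1,\ldots,T_r$, and let $I\subset\{1,\ldots,r\}$ index those $T_j$ whose image $\beta(T_j)$ in $S$ under the second projection $\beta$ meets $\La_k$; a verification gives $\La_{k+1}=\bigcup_{j\in I}\beta(T_j)$, closed by properness of $\beta$. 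Monotonicity $\La_k\subset\La_{k+1}$ (any $l\in\La_k$ meets $Y_1\subset\cup_i Y_i$ at a point $p\in\Ga_{k+1}$, and the component of $X_p$ containing $l$ trivially meets $\La_k$) turns the sequence into an ascending chain of closed subvarieties in the Noetherian variety $S$. Such a chain must stabilize at some $\La_{k_0}$; since $\bigcup_k\La_k=S$ and $\La_{k_0}$ is closed, $\La_{k_0}=S$.

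The main obstacle is the closedness induction, specifically the identification $\La_{k+1}=\bigcup_{j\in I}\beta(T_j)$. A priori the irreducible components of the fiber $X_p$ need not correspond bijectively to the global components $T_j$ of $\Theta_{k+1}$ over $p$: the pointwise fiber decomposition may be finer than the restriction of the global one. Resolving this requires a standard family-of-irreducible-components argument: over a dense open of each $\alpha(T_j)\subset\Ga_{k+1}$ (with $\alpha$ the first projection), the fiber $T_j|_p$ is a single irreducible component of $X_p$; specialization together with the already-established monotonicity $\La_k\subset\La_{k+1}$ then propagates the ``meets $\La_k$'' property from the generic fiber to the entire image, sealing the equality. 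This step is routine in characteristic zero but is the one place where algebraicity, rather than mere constructibility, of $\La_{k+1}$ has to be argued with care.
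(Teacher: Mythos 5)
Your argument follows the same two-step strategy as the paper's proof --- first show $\bigcup_k \La_k = S$ via reachability along strong-connectivity chains, then force stabilization of the increasing sequence of closed sets --- but you carry out both steps with far more care. Your induction on chain length is a clean, correct expansion of the paper's one-sentence contrapositive (``a line outside $\bigcup_n\La_n$ cannot be reached from $l_0$''), and you correctly identify and address a point the paper silently assumes, namely that each $\La_k$ is Zariski-closed; the component-selection via the incidence variety $\Theta_{k+1}$ and the generic-fiber argument is exactly the kind of justification the paper omits.

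There is, however, one step that fails as stated, and it is the same step the paper gets wrong: an \emph{ascending} chain of closed subsets of a Noetherian space need not stabilize (Noetherianity gives the descending chain condition on closed sets; think of $\{1\}\subset\{1,2\}\subset\cdots$ in $\A^1$). So ``ascending chain of closed subvarieties in the Noetherian variety $S$ must stabilize'' is not a valid inference, and your closedness work does not by itself rescue the conclusion. The repair is to use that $S$, being a joining component, is irreducible, together with the fact that the ground field is $\C$ (or reduces to $\C$ by the Lefschetz principle): an irreducible variety over an uncountable field is not a countable union of proper closed subsets, so once you know $S=\bigcup_k\La_k$ with each $\La_k$ closed, some $\La_{k_0}$ must already equal $S$, and monotonicity then gives stabilization from $k_0$ on. With that substitution your proof is complete; note that this irreducibility/uncountability input is genuinely needed and is absent from both your write-up and the paper's.
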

\begin{proof}
 If $\cup_{n \in \N} \La_n \varsubsetneq S$, then every line in $S \setminus \cup_{n \in \N} \La_n$ cannot be reach from $l_0$ as required by the strong connectivity assumption. Thus we necessarily have: $\cup_{n \in \N} \La_n = S$. Since the sequence $\{\La_k\}$ is an increasing sequence of closed subsets in $S$, there must exist $k_0$ such that $\La_{k_0} = S$.
\end{proof}

%% Now, we proceed to prove the main consequence of the strong connectivity property, which is the central idea of the results that will presented below. 
%% 
%% \begin{proposition}
%% {\bf Elementary Move} 
%% Let $m=n-1$ distinct closed varieties $Y_1,...,Y_m$,  each of dimension $n-2$, embedded in $\PP^n$.Let $S$ be a join component $S$ of $J(Y_1,...,Y_m)$ satisfying condition~(\ref{full_cond}) and which is strong connected. Assume that there exists an additional irreducible variety $Y$, distinct from $Y_1,...,Y_m$, of dimension $n-2$, such that $S \subset \La(Y)$. Then there exists a dense open set of $S$, say $U$, such that for every line $l$ in $U$, there exists a point $p_i \in l \cap Y_i$, such that 
%% 
%% \end{proposition}

Now we are in a position to prove the following theorem, which is the basis of the main result that will proved below.

\begin{theo}
\label{theo::main-(n-1)}
Let $m=n-1$ distinct closed varieties $Y_1,...,Y_m$,  each of dimension $n-2$, embedded in $\PP^n$. For a join component $S$ of $J(Y_1,...,Y_m)$ satisfying condition~(\ref{full_cond}) and which is strong connected, if there exists an additional irreducible variety $Y$, distinct from $Y_1,...,Y_m$, of dimension $n-2$, such that $S \subset \La(Y)$, then there is an hyperplane containing $Y_1,...,Y_m,Y$.
\end{theo}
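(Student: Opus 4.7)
My plan is to establish the theorem in three stages: (i) the line locus $W := \bigcup_{l \in S} l$ is an irreducible hypersurface of $\PP^n$; (ii) the extra variety $Y$ is in fact contained in $W$; and (iii) $W$ is a linear hyperplane, which then serves as the required hyperplane containing $Y_1,\dots,Y_{n-1},Y$. Stage (i) is fast: the union $X = Y_1 \cup \cdots \cup Y_{n-1} \cup Y$ has dimension $d = n-2$, and a generic line of $S$ is a $(d+2)$-secant of $X$, so by Theorem \ref{theo::ran} the family $S$ fills at most a $(d+1) = (n-1)$-fold. Combined with the lower bound $\dim W \geq n-1$ and the irreducibility of $W$ given by Lemma \ref{lemma::fiber}, this yields $\dim W = n-1$.

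Stage (ii) is a dimension count on the incidence $\Si_Y = \{(l,q) \in S \times Y \vtl q \in l\}$. Because $S \subset \De(Y)$, the projection $\Si_Y \to S$ is dominant with finite generic fibres, so $\dim \Si_Y = \dim S$. Since $\Si_Y$ embeds into the incidence $\Si$ of Lemma \ref{lemma::fiber}, the fibre of $\Si_Y \to Y$ over a point $q \in Y$ is contained in $X_q = S \cap \si_q$; for $q$ in the dense open $Y \setminus W_1$ (where $W_1$ has codimension at least $3$ in $\PP^n$ by Lemma \ref{lemma::fiber}(3), and so $Y \not\subset W_1$ since $\dim Y = n-2$) this fibre has dimension exactly $\mu$. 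Hence the image has dimension at least $\dim S - \mu = n - 2 = \dim Y$, and by irreducibility of $Y$ it equals $Y$, giving $Y \subset W$.

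Stage (iii) is the core of the proof. My aim is to show that the Gauss map $G : W \dra (\PP^n)^*$, $p \mapsto T_p(W)$, is constant, taking a single hyperplane value $H$; then any $(n-2)$-dimensional subspace $L \subset H$ lies in $T_p(W)$ for generic $p$, Proposition \ref{prop::pencil} identifies $W$ with an $(n-1)$-dimensional linear subspace, and $W = H$ follows since $T_p(W) = W$ for a linear $W$. To establish constancy of $G$, I would use two facts. First, at a generic $p \in Y_i$, the hyperplane $T_p(W)$ is rigidly determined by the codimension-one subspace $T_p(Y_i)$ together with any single line $l \in X_p$ not lying in $T_p(Y_i)$, since both must sit inside $T_p(W)$ and together they span it. Second, I exploit the chain structure of Lemma \ref{lemma::strong_connectivity}: at each transition $(p_i, u_i) \to (p_{i+1}, u_{i+1})$, the shared line $u_i$ lies in both $T_{p_i}(W)$ and $T_{p_{i+1}}(W)$, and the assumption that $u_i$ and $u_{i+1}$ lie in a common irreducible component of $X_{p_{i+1}}$ supplies a one-parameter family of lines through $p_{i+1}$ interpolating between them; its infinitesimal behaviour forces the two tangent hyperplanes to agree. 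Iterating along a chain exhausting $S$ yields global constancy.

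The main obstacle is this last propagation: rigorously translating the component-level strong connectivity into an honest identification of successive tangent hyperplanes. This is the step where the strong connectivity hypothesis does essential work and cannot be weakened, as Example 1 of the introduction demonstrates. The technical subtlety lies in using the irreducible-component condition to supply enough deformations to rigidify the tangent hyperplane along each chain link.
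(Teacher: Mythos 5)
Your stages (i) and (ii) are fine: (i) is exactly the paper's opening move (Theorem~\ref{theo::ran} plus Lemma~\ref{lemma::fiber} give $\dim W = n-1$), and (ii) is a sensible dimension count that the paper in fact leaves implicit. The genuine gap is in stage (iii), and it is not a technical loose end but the entire content of the theorem. You have two concrete problems there. First, your local rigidity statement presupposes that $T_p(W)$ is a hyperplane at a generic point $p$ of $Y_i$, i.e.\ that $Y_i$ meets the smooth locus of $W$. There is no reason for this: $W$ is swept out by lines all of which pass through $Y_i$, so $Y_i$ is precisely where $W$ is most likely to be singular (think of the vertex of a cone), and the Gauss map may simply be undefined at every point of every $Y_i$ --- which is where your whole propagation is anchored. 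Second, the propagation step itself ("its infinitesimal behaviour forces the two tangent hyperplanes to agree") is an assertion, not an argument, and as stated it is false in general: two points of a hypersurface joined by a line lying on that hypersurface need not have equal tangent hyperplanes (a smooth quadric in $\PP^3$ is ruled and its tangent plane rotates along each ruling), so neither the shared line $u_i$ nor the mere existence of a one-parameter family through $p_{i+1}$ suffices. You acknowledge the obstacle yourself, but that obstacle is the theorem.

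The paper's mechanism avoids the Gauss map of $W$ entirely. It attaches to a generic line $l$ of $S$ the hyperplane $H(l)$ spanned by the tangent spaces $T_{p_i}(Y_i)$ at the points $p_i = l\cap Y_i$ (this span is a hyperplane by another application of Theorem~\ref{theo::ran}, to the configuration of tangent spaces). Local constancy of $H$ along an irreducible component $X_{p_i}^k$ of $X_{p_i}$ through $l_0$ is then proved not infinitesimally at $p_i$, but by looking at the trace $D_{p_i,j}^k$ that the swept surface $Z_{p_i}^k=\bigcup_{l\in X_{p_i}^k} l$ leaves on the \emph{other} varieties $Y_j$: for $q$ in this trace, $T_q(Y_j)$ meets the fixed space $T_{p_i}(Y_i)$ in codimension one, so Proposition~\ref{prop::planar} forces the whole trace into the single hyperplane spanned by $T_{p_i}(Y_i)$ and $l_0$. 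That is the rigidifying ingredient your sketch is missing; once you have it, the iteration via Lemma~\ref{lemma::strong_connectivity} proceeds as you describe.
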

{\it Remark:} Condition~(\ref{full_cond}) and strong connectivity are essential, as shown in Example 1 on page 2.
\begin{proof}
\begin{enumerate}
\item[(i)] Let us consider the variety $X = Y_1 \cup Y_2 \cup ... \cup Y_m \cup Y$. The dimension of $X$ is $n-2$ and every line in $S$  is a $n-$secant of $X$. Then by theorem~\ref{theo::ran}, the union of lines in $S$, denoted $W$ in the lemma~\ref{lemma::fiber}, has dimension at most $n-1$. Then, by lemma~\ref{lemma::fiber}, it has dimension exactly $n-1$.  

Moreover if we consider a generic line $l_0$ of $S$, we can assume that the intersection $Y_i \cap l_0$ is made of smooth points of $Y_i$, for all $i=0,...,m$. Pick $p_i$ in $Y_i \cap l_0$. The tangent spaces $T_{p_i}(Y_i)$ are all $(n-2)-$dimensional. By theorem~\ref{theo::ran}, the set of lines that intersect all the spaces spans a linear space of $\PP^n$ of dimension at most $n-1$. However a short calculation shows that for all $i$ and for all $p \in T_{p_i}(Y_i)$ , there exists such a line that passes through $p$. Then this linear space is actually an hyperplane. We shall denote this hyperplane $H(l_0)$. For all $p_i \in Y_i \cap l_0$, $T_{p_i}(Y_i) \subset H(l_0)$.    

\item[(ii)] By the genericity of the line $l_0$ considered above, we can now assume that there exists a dense open set $U \subset S$, such that for every line $l \in U$, there exists an hyperplane $H(l)$, such that for all $i$, $T_{p_i}(Y_i) \subset H(l)$, where $p_i = Y_i \cap l$.

For a given line $l_0 \in U$, and $p_i = l_0 \cap Y_i$, we have $\dim(X_{p_i}) \geq 1$, by lemma~\ref{lemma::fiber}. Let $X_{p_i}^1, X_{p_i}^2, ..., X_{p_i}^\si$ be the irreducible components of $X_{p_i}$ which contain $l_0$. Since $S$ is strongly connected, there must exist $i$ and $k$ such that $\dim(X_{p_i}^k) \geq 1$. Thus we shall assume $i$ has been chosen so that this condition holds. Let $X_{p_i}(\{l_0\}) = X_{p_i}^1 \cup X_{p_i}^2 \cup ... \cup X_{p_i}^\si$ be the union of these components. Then for each $k$, $U_{p_i}^k = X_{p_i}^k \cap U$ is dense in $X_{p_i}^k$. Let $Z_{p_i}^k = \bigcup_{l \in X_{p_i}^k} l$ be the union of lines in $X_{p_i}^k$, and $\dot{Z}_{p_i}^k = \bigcup_{l \in U_{p_i}^k} l$ be the union of lines in $U_{p_i}^k$. Then $\dot{Z}_{p_i}^k$ is open and dense in $Z_{p_i}^k$. For $j \neq i$, let $D_{p_i,j}^k = Z_{p_i}^k \cap Y_j$ and $\dot{D}_{p_i,j}^k = \dot{Z}_{p_i}^k \cap Y_j$. Thus $\dot{D}_{p_i,j}^k$ is open in $D_{p_i,j}^k$. Let $\bar{D}_{p_i,j}^k$ be the closure of $\dot{D}_{p_i,j}^k$ in $D_{p_i,j}^k$.

For every $j \neq i$ and every $k$ and for all $q \in \dot{D}_{p_i,j}^k$, $T_q(Y_j)$ and $T_{p_i}(Y_i)$ lie in the same hyperplane. This is due to the fact that both $p_i$ and $q \in \dot{D}_{p_i,j}^k$ lie in some line belonging to $U$. 

In particular, $T_q(\bar{D}_{p_i,j}^k)$ intersects $T_{p_i}(Y_i)$ along a linear space of dimension $\dim(T_q(\bar{D}_{p_i,j}^k))+n-2-n+1 = \dim(T_q(\bar{D}_{p_i,j}^k))-1$. By proposition~\ref{prop::planar}, there exits some hyperplane $H_{p_i,j}^k$ containing $T_{p_i}(Y_i)$ and $\bar{D}_{p_i,j}^k$. Thus $\dot{Z}_{p_i}^k \subset H_{p_i,j}$. Then we also have $Z_{p_i}^k \subset H_{p_i,j}^k$. For all $k$, $H_{p_i,j}^k$ is spanned by $T_{p_i}(Y_i)$ and $l_0$. Thus all these hyperplanes coincide. Therefore we shall denote $H_{p_i}$ this hyperplane and $D_{p_i,j} = D_{p_i,j}^1 \cup ... \cup D_{p_i,j}^\si \subset H_{p_i}$ for all $j \neq i$. Therefore it is clear that $H_{p_i} = H(l)$, for any line $l \in Z_{p_i} = Z_{p_i}^1 \cup ... \cup Z_{p_i}^\si$, since $T_{p_i}(Y_i) \subset H(l) \cap H_{p_i}$ and $l \subset H(l) \cap H_{p_i}$. As a consequence $H(l)$ is constant for all $l \in X_{p_i}(\{l_0\})$ and equals $H_{p_i}$.

%In the sequel, for any closed subvariety $\La$ of $S$ and any point $p \in \La \cap (\cup_{i=1}^m Y_i)$, we shall denote $X_p(\La)$ the union of the irreducible components of $X_p$ that intersect $\La$. 

In order to use lemma~\ref{lemma::strong_connectivity}, we shall consider a particular line $\La_0 = \{l_0\} \subset U$ and $\Ga_1 = l_0 \cap (\cup_{i=1}^m Y_i)$. Then we can conclude that for every $p \in \Ga_1$, $\bigcup_{l \in X_p(\La_0)} l \subset H(l_0)$.  Hence every line in $\La_1 = \cup_{p \in \Ga_1} X_p(\La_0)$ is contained within the plane $H(l_0)$.  

\item[(iii)] Let $l$ be any line in $\La_1$. By the previous argument, $H(l) = H(l_0)$. Then for every $p \in l \cap (\cup_{i=1}^m Y_i)$, the union of lines in $X_p(\{l\})$ is contained within $H(l)=H(l_0)$. Let $\Ga_2 = \Lambda_1 \cap (\cup_{i=1}^m Y_i)$ and  $\La_2 = \cup_{p \in \Ga_2} X_p$. Then we can conclude that every line in $\La_2$ is included in $H(l_0)$. 

By induction, we construct  
$$
\Ga_{k+1} =  \Lambda_k \cap (\cup_{i=1}^m Y_i) \mbox{ and } \La_{k+1} = \cup_{p \in \Ga_k} X_p(\La_k). 
$$
Every line in $\La_k$ lies within the plane $H(l_0)$. By lemma~\ref{lemma::strong_connectivity}, the increasing  sequence $\{\La_k\}$ stabilizes at some stage $k_0$ and $\La_{k_0} = S$.  Then $W = \bigcup_{l \in S} l \subset H(l_0)$. In particular for all $i$, $Y_i \subset H(l_0)$. 
\end{enumerate}
\end{proof}

Let us now consider the case $m < n-1$.

\begin{coro}
\label{coro::m<n}
For $m<n-1$, let $Y_1,...Y_m$ be distinct irreducible subvarieties of $\PP^n$, each of dimension $n-2$. We assume that there exists an additional variety $Y$ also of dimension $n-2$, such that a joining  component $S$ of $J(Y_1,...,Y_m)$, satisfying condition~(\ref{full_cond}) and strongly connected, is included into $\La(Y)$. Then the varieties $Y_1,...,Y_m,Y$ all lie in the same hyperplane.
\end{coro}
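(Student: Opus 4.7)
My plan is to reduce the corollary to theorem~\ref{theo::main-(n-1)} by enlarging the list $Y_1,\ldots,Y_m$ with $n-1-m$ auxiliary irreducible subvarieties $Y_{m+1},\ldots,Y_{n-1}$ of dimension $n-2$, so that all hypotheses of the theorem --- joining component, condition~(\ref{full_cond}), strong connectivity, and inclusion $S\subset\De(Y)$ --- carry over to the enlarged system. The theorem applied to $(Y_1,\ldots,Y_{n-1})$ with the additional variety $Y$ then produces a hyperplane containing the whole family, hence in particular $Y_1,\ldots,Y_m,Y$.

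I would take the auxiliary varieties to be generic hyperplane sections of $W:=\bigcup_{l\in S}l$. By lemma~\ref{lemma::fiber} the variety $W$ is irreducible and strictly contains each $Y_i$. Assuming $\dim W=n-1$, a Bertini argument guarantees that for generic hyperplanes $H_1,\ldots,H_{n-1-m}$ in $\PP^n$ the sections $Y_{m+j}:=W\cap H_j$ are irreducible of dimension exactly $n-2$, pairwise distinct, and distinct from every $Y_i$ and from $Y$. Each line $l\in S$ lies in $W$ and meets $H_j$ in a single point, necessarily in $Y_{m+j}$, so $S\subset\De(Y_{m+j})$ for every $j$. Condition~(\ref{full_cond}) for $Y_{m+j}$ is automatic: $Y_{m+j}\subset W$ and every point of $W$ lies on some line of $S$ by the very definition of $W$.

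I would then check that $S$ is a joining component of $J(Y_1,\ldots,Y_{n-1})$ and that strong connectivity is preserved. Any irreducible component $C$ of the enlarged join containing $S$ is contained in $J(Y_1,\ldots,Y_m)$, hence in the joining component $S$ of the smaller join, so $C=S$. A dimension count rules out $S$ being an intersection component: using $Y_i\subset W$ (which follows from condition~(\ref{full_cond})) one has $Y_1\cap\ldots\cap Y_{n-1}=(Y_1\cap\ldots\cap Y_m)\cap H_1\cap\ldots\cap H_{n-1-m}$, which is empty for generic $H_j$. Strong connectivity passes to the enlarged system for free, since the admissible set $Y_1\cup\ldots\cup Y_{n-1}$ for the base points $p_i$ in definition~\ref{def::strong_connectivity} only grows, weakening the requirement. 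Theorem~\ref{theo::main-(n-1)} then yields the sought hyperplane.

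The main obstacle I anticipate is the implicit assumption $\dim W=n-1$. If instead $W=\PP^n$, the sections $W\cap H_j$ are themselves hyperplanes and have the wrong dimension. This regime requires separate treatment. One natural device is to observe that $W=\PP^n$ forces every point of $Y$ to lie on some line of $S$, so condition~(\ref{full_cond}) holds for $Y$ as well; one can then promote $Y$ into the list, reducing the problem to $m+1$ varieties, and iterate the reduction on $n-1-m$ until the theorem applies. Verifying that this descent always terminates --- i.e., that at each intermediate stage a further suitable auxiliary variety is available --- is the technical heart of the alternative argument.
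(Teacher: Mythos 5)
Your reduction is the same one the paper uses: adjoin $n-1-m$ auxiliary codimension-$2$ varieties and invoke Theorem~\ref{theo::main-(n-1)}. The paper's own proof is a one-line sketch --- it simply asserts that one can choose $Y_{m+1},\ldots,Y_{n-1}$ so that $S'=S\cap\De(Y_{m+1})\cap\ldots\cap\De(Y_{n-1})$ has dimension $n-1$, still satisfies condition~(\ref{full_cond}) and is still strongly connected --- without constructing such varieties or checking those properties. Your choice of generic hyperplane sections of $W=\bigcup_{l\in S}l$ is in one respect cleaner than what the paper indicates: since every line of $S$ already lies in $W$ and hence meets each $W\cap H_j$, you get $S\subset\De(Y_{m+j})$ and so the join component is $S$ itself rather than a proper cut $S'\subsetneq S$; consequently condition~(\ref{full_cond}) for the old varieties and strong connectivity (whose sets $X_p=S\cap\si_p$ are unchanged) carry over for free, whereas for the paper's $S'$ the sets $X_p$ shrink and strong connectivity of $S'$ is a genuine additional claim that the paper leaves unjustified. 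Your remaining worry, the case $\dim W=n$, is a real one: Ran's bound (Theorem~\ref{theo::ran}) only controls $(d+2)$-secants, and for $m<n-1$ the lines of $S$ are a priori only $(m+1)$-secants of $Y_1\cup\ldots\cup Y_m\cup Y$, so $W=\PP^n$ is not excluded in advance; but this case is equally unaddressed by the paper (whose $S'$ would still exist dimension-wise, but whose verification of~(\ref{full_cond}) and strong connectivity is missing there too). In short, your argument is at least as complete as the published one and follows the same route; to fully close it you would need either to rule out $W=\PP^n$ under the hypotheses or to make your descent argument terminate.
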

\begin{proof}
This corollary follows immediately. It is enough to consider additional varieties $Y_{m+1},...,Y_{n-1}$ such that $S' = S \cap \La(Y_{m+1}) ... \cap \La(Y_{n-1})$ has dimension $n-1$, satisfies condition~(\ref{full_cond}) and is strongly connected. Since $S' \subset \La(Y)$, the varieties $Y_1,....,Y_{n-1}$ must lie in the same hyperplane by theorem~\ref{theo::main-(n-1)}. 
\end{proof}

%% Before we present consequences of this theorem, let us show by an example that the strong connectivity is indeed necessary for the result to hold. Consider the four circles $C_1,C_2,C_3,C_4$ in $\C^3$ respectively defined by $\{z=0,x^2+y^2-1=0\}$, $\{z=1,x^2+y^2-1=0\}$, $\{z=2,x^2+y^2-1=0\}$, $\{z=3,x^2+y^2-1=0\}$. Let $Q_0$ be the cylinder defined by $x^2+y^2-1=0$. Consider now the surfaces $S_1,S_2,S_3,S_4$ obtained by the following product $S_i = C_i \times \C$. These surfaces are embedded into $\C^4$. We consider their Zariski closure $Y_1,Y_2,Y_3,Y_4$ in $\PP^4$. Let $Q_1 = Q_0 \times \C$ and $Q$ its Zariski closure in $\PP^4$. Let $S$ be the set of lines contained in $Q$. By construction,  $S$ is join component of the $J(Y_1,Y_2,Y_3)$, such that $S \subset \La(Y)$. The condition~(\ref{full_cond}) is satisfied, while $S$ is not strongly connected. The union of lines in$S$, which is $Q$, is not a linear space. 

Now, we shall generalize the theorem~\ref{theo::main-(n-1)} to the case of higher codimension varieties. 

\begin{coro}
\label{coro::higher_codim_hyperplane}
For $m \leq n$, let $Y_1,..., Y_m$ be distinct irreducible subvarieties of $\PP^n$, each of dimension $d \leq n-2$. We assume that $m \leq d+1$ and that there exists an additional variety $Y$ also of dimension $d$, such that a joining component $S$ of $J(Y_1,...,Y_m)$, satisfying condition~(\ref{full_cond}) and strongly connected, is included into $\La(Y)$. Then the varieties $Y_1,...,Y_m,Y$ all lie in the same hyperplane.
\end{coro}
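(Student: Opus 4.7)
The plan is to reduce the corollary to the codimension-two case of Theorem~\ref{theo::main-(n-1)} and Corollary~\ref{coro::m<n} by means of a generic linear projection from $\PP^n$ to $\PP^{d+2}$, and then pull the resulting hyperplane back.

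First I would choose a generic linear subspace $C \subset \PP^n$ of dimension $n-d-3$. For sufficiently generic $C$, one has $C \cap (Y_1 \cup \cdots \cup Y_m \cup Y) = \emptyset$ and a generic line of $S$ avoids $C$; this follows from standard dimension counts in the Grassmannian of $(n-d-3)$-planes. Let $\pi : \PP^n \dra \PP^{d+2}$ denote the projection with center $C$, and let $\what{\pi} : \G(1,n) \dra \G(1,d+2)$ be the induced map on Grassmannians. Set $Y_i' = \pi(Y_i)$, $Y' = \pi(Y)$, and let $S'$ be the closure of $\what{\pi}(S)$. For generic $C$, each $Y_i'$ and $Y'$ is an irreducible closed subvariety of $\PP^{d+2}$ of dimension $d$, hence of codimension $2$.

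Next I would check that all the hypotheses transfer: $S'$ is a joining component of $J(Y_1',\ldots,Y_m')$ contained in $\De(Y')$; condition~(\ref{full_cond}) persists because every point $p' \in Y_i'$ lifts to some $p \in Y_i$ through which passes a line $l \in S$, whose image $\what{\pi}(l) \in S'$ traverses $p'$; and strong connectivity descends to $S'$ by projecting connecting sequences. Since $m \leq d+1 = (d+2)-1$, Theorem~\ref{theo::main-(n-1)} (if $m = d+1$) or Corollary~\ref{coro::m<n} (if $m < d+1$) applies in the ambient $\PP^{d+2}$, producing a hyperplane $H' \subset \PP^{d+2}$ that contains all $Y_i'$ and $Y'$.

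To conclude, the preimage $\pi^{-1}(H')$ equals the cone $C \join H'$, which is a linear subspace of $\PP^n$ of dimension $(n-d-3)+(d+1)+1 = n-1$, i.e., a hyperplane. Since $Y_i \cap C = \emptyset$ and $\pi(Y_i) \subset H'$, we get $Y_i \subset \pi^{-1}(H')$, and similarly for $Y$, proving the corollary. The main obstacle is verifying rigorously that strong connectivity genuinely passes to $S'$: one must ensure the center $C$ is chosen so that the irreducible-component structure of each fiber $X_p \subset S$ maps faithfully onto the corresponding fiber structure in $S'$, which requires a careful genericity argument on the universal family of projection centers.
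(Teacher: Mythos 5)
Your proposal is correct and follows essentially the same route as the paper: a generic linear projection reducing to the codimension-two case of Theorem~\ref{theo::main-(n-1)} / Corollary~\ref{coro::m<n}, followed by pulling the hyperplane back as the cone over the projection center. The only cosmetic difference is that you project in one step from an $(n-d-3)$-plane to $\PP^{d+2}$, whereas the paper iterates point projections by induction on $n-2-d$; the substantive claims to verify (persistence of condition~(\ref{full_cond}), strong connectivity, and the joining-component structure under projection) are the same in both, and you rightly flag the transfer of strong connectivity as the point needing the most care.
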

\begin{proof}
We shall proceed by induction on $\de = n-2-d$. For $\de = 0$, it is the content of the previous results. Let us assume that it is true for some $\de$. Then we consider a generic point $p \in \PP^n$, not lying on $X = Y_1 \cup ... \cup Y_m \cup Y$ and so a generic hyperplane $H$ (so not passing through $p$). We project $Y_1,...,Y_m,Y$ onto $H$ trough the center of projection defined by $p$. We obtain $Z_1,...,Z_m,Z$. The variety $S$ yields by this projection a variety $S'$, which is a joining component of $J(Z_1,...,Z_m)$, satisfying condition~(\ref{full_cond}), strongly connected and included in $\La(Z)$. Thus by induction, there exists a $(n-2)-$dimensional  linear space $L$, included in $H$, that contains $Z_1,....,Z_m,Z$. Then the hyperplane generated by $L$ and $p$ contains $Y_1,...,Y_m,Y$. 
\end{proof}

\begin{coro}
\label{coro::higher_codim_optimal}
Let $Y_1,...Y_m$ be distinct irreducible subvarieties of $\PP^n$, each of dimension $d \leq n-2$. We assume that $m \leq d+1$ and there exists an additional variety $Y$ also of dimension $d$, such that a joining component $S$ of $J(Y_1,...,Y_m)$, satisfying condition~(\ref{full_cond}) and strongly connected, is included into $\La(Y)$. Then the varieties $Y_1,...,Y_m,Y$ all lie in the same $(d+1)-$dimensional space.
\end{coro}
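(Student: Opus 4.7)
The plan is to iterate corollary~\ref{coro::higher_codim_hyperplane}, peeling off one ambient dimension at a time until the ambient space has dimension exactly $d+1$. Formally, I induct on $\eta := n-d-1 \geq 1$.

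\textbf{Base case} $\eta = 1$: then $d = n-2$ and a single application of corollary~\ref{coro::higher_codim_hyperplane} places $Y_1,\ldots,Y_m,Y$ inside a hyperplane of $\PP^n$, whose dimension is $n-1 = d+1$, and the conclusion follows.

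\textbf{Inductive step} $\eta \geq 2$: then $d \leq n-3$, so corollary~\ref{coro::higher_codim_hyperplane} applies and yields a hyperplane $H \subset \PP^n$ of dimension $n-1$ with $Y_1,\ldots,Y_m,Y \subset H$. I now transport all the data inside $H \iso \PP^{n-1}$ and invoke the inductive hypothesis. The point that must be checked is that the hypotheses of corollary~\ref{coro::higher_codim_hyperplane} continue to hold in the new ambient space with $n' = n-1$ and the same $d$. A generic line $l \in S$ meets at least two distinct $Y_i$ in distinct points of $H$, hence $l \subset H$; by irreducibility of $S$ one then has $S \subset \G(1,H)$. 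The morphism $s: V \lra \G(1,n)$ of section~\ref{sec::background} defining joining components factors through $\G(1,H)$, so $S$ remains a joining component of $J(Y_1,\ldots,Y_m)$ computed inside $H$. Condition~(\ref{full_cond}), strong connectivity (which by definition~\ref{def::strong_connectivity} depends only on the $Y_i$ and on $S$ itself, not on the ambient projective space), and the inclusion $S \subset \De(Y)$ are manifestly preserved. Since $n' - d - 1 = \eta - 1$, the inductive hypothesis applied inside $H$ produces a $(d+1)$-dimensional linear subspace $L \subset H \subset \PP^n$ containing $Y_1,\ldots,Y_m,Y$, completing the proof.

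I do not expect a serious obstacle here: all of the heavy lifting is already encapsulated in corollary~\ref{coro::higher_codim_hyperplane}, and the induction merely strips away one hyperplane at a time. The only delicate point is confirming that $S$ remains a joining component (and not, say, an intersection component) of $J$ when reinterpreted inside $H$, which is immediate from the intrinsic characterization of joining components via the morphism $s$ on collinear tuples of points of the $Y_i$.
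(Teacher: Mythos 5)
Your proposal is correct and follows essentially the same route as the paper: the paper's own proof simply applies corollary~\ref{coro::higher_codim_hyperplane} once and then says ``the process can be iterated'' until a $(d+1)$-dimensional space is reached. Your version merely makes the iteration precise as an induction on $n-d-1$ and verifies that the hypotheses (in particular that $S$ lies in $\G(1,H)$ and remains a joining component inside $H$) persist at each step, details the paper leaves implicit.
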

\begin{proof}
By corollary~\ref{coro::higher_codim_hyperplane}, we know that $Y_1,...,Y_m,Y$ are all contained in some hyperplane $H \iso \PP^{n-1}$. If there common dimension $d$ is strictly smaller than $n-2$, the procees can be iterated, until we find they are all contained in some $(d+1)-$dimensional linear space.  
\end{proof}

We are now in a postion to formulate the multi-secant lemma.

\begin{theo} {\bf Multi-Secant Lemma}
\label{theo::multi-secant-lemma}
Consider an equidimensional variety $X$, of dimension $d$. For $m \leq d+1$, if the variety of $m-$secant satisfies the following assumption:
\begin{enumerate}
\item through every point in $X$ passes at least one $m-$secant,
\item the variety of $m-$secant is strongly connected,
\item every $m-$secant is also a $(m+1)-$secant,
\end{enumerate}
then the variety $X$ can be embedded in $\PP^{d+1}$. The result still holds even if not the whole variety of $m-$secants satisfies the assumptions, but only a joining component of it.
\end{theo}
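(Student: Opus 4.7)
The plan is to reduce directly to Corollary~\ref{coro::higher_codim_optimal} by locating inside $X$ irreducible components $Y_1,\dots,Y_m$ together with an ``additional'' irreducible component $Y$ so that the given joining component $S$ satisfies every hypothesis of that corollary.

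Decompose $X$ into its irreducible components, each of dimension $d$ by equidimensionality. By the very definition of a joining component of the variety of $m$-secants, $S$ arises from an ordered tuple of irreducible components $(Y_1,\dots,Y_m)$ of $X$, possibly with repetitions, so that a generic line $\ell\in S$ meets $X$ in $m$ distinct points $p_i\in Y_i$. Hypothesis~(3) upgrades such an $\ell$ to an $(m+1)$-secant, so $\ell\cap X$ contains at least one further point $q$. Since $S$ is irreducible and the component of $X$ containing $q$ depends constructibly on $\ell$, there is a fixed irreducible component $Y$ of $X$ such that $q\in Y$ for generic $\ell$; equivalently, $S\subset\De(Y)$.

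Hypothesis~(1), applied to the joining component $S$, says every point of $X$ lies on some line of $S$. But every line of $S$ is contained in $Y_1\cup\cdots\cup Y_m\cup Y$, so every irreducible component of $X$ must already appear in $\{Y_1,\dots,Y_m,Y\}$, forcing $X=Y_1\cup\cdots\cup Y_m\cup Y$. Now Corollary~\ref{coro::higher_codim_optimal} applies to the configuration $(Y_1,\dots,Y_m;Y)$ with joining component $S$: condition~(\ref{full_cond}) is hypothesis~(1), strong connectivity is hypothesis~(2), and $S\subset\De(Y)$ was just established. It produces a $(d+1)$-dimensional linear subspace $L\subset\PP^n$ containing $Y_1,\dots,Y_m,Y$, hence containing $X$, which gives the claimed embedding into $\PP^{d+1}$.

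The main obstacle is that Corollary~\ref{coro::higher_codim_optimal} is stated for pairwise distinct $Y_1,\dots,Y_m$ with $Y$ additional, whereas the tuple extracted from $S$ may well carry repetitions (the extreme case being irreducible $X$, forcing $Y_1=\cdots=Y_m=Y=X$). The cleanest remedy is to inspect the proof of Theorem~\ref{theo::main-(n-1)}: distinctness plays no role beyond bookkeeping, the actual inputs being the $m+1$ distinct smooth intersection points on a generic line of $S$ (provided by hypothesis~(3)) and the Ran-type tangent-space argument via Theorem~\ref{theo::ran}, both of which are insensitive to coincidences among the $Y_i$. Alternatively one may pass to the list $Z_1,\dots,Z_s$ of pairwise distinct components in $\{Y_1,\dots,Y_m,Y\}$ (with $s\le m+1\le d+2$), view $S$ as a joining component of $J(Z_1,\dots,Z_{s-1})\cap\De(Z_s)$, and invoke the corollary in its stated form.
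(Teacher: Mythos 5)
The paper itself supplies no proof of Theorem~\ref{theo::multi-secant-lemma}: it is stated as a direct consequence of Corollary~\ref{coro::higher_codim_optimal}, and your overall strategy --- read off the components $Y_1,\dots,Y_m$ and an additional component $Y$ with $S\subset\De(Y)$ from a generic line of the joining component, then invoke that corollary --- is exactly the intended reduction. Your derivation of $S\subset\De(Y)$ from hypothesis~(3), using irreducibility of $S$ and the finiteness of the set of components of $X$, is sound.

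There are, however, two genuine gaps. First, the sentence ``every line of $S$ is contained in $Y_1\cup\cdots\cup Y_m\cup Y$'' is false: a line of $S$ meets $X$ in finitely many points and is not contained in $X$, so the identity $X=Y_1\cup\cdots\cup Y_m\cup Y$ that you extract from it is unjustified. Hypothesis~(1) only gives $X\subset W=\bigcup_{l\in S}l$; a component $Z$ of $X$ could a priori be swept out by a proper closed subfamily of $S$ whose lines meet $Z$ only in ``extra'' intersection points, without $Z$ being any of $Y_1,\dots,Y_m,Y$. Such components are not reached by the bare statement of Corollary~\ref{coro::higher_codim_optimal}, which only places $Y_1,\dots,Y_m,Y$ in the $(d+1)$-plane $L$; what you actually need is the stronger fact, established inside the proof of Theorem~\ref{theo::main-(n-1)} (step~(iii): $W\subset H(l_0)$) and inherited by the corollaries, that the whole swept variety $W$ lies in $L$, whence $X\subset W\subset L$. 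Second, the distinctness issue you flag is real and your remedies remain assertions: remedy~(b) collapses precisely in the extreme case you name ($X$ irreducible forces $s=1$, and $J$ of a too-short list is not an object the paper's lemmas cover, nor is $S$ then a joining component of it in the paper's sense), while remedy~(a) requires actually re-running the proof of Theorem~\ref{theo::main-(n-1)} with repeated $Y_i$, where for instance the points of $Z_{p_i}^k\cap Y_j$ for $j\neq i$ must be replaced by the distinct intersection points supplied by hypothesis~(3). That is plausibly only bookkeeping, but as written it is a claim rather than a proof --- and it is a gap the paper shares, since all of its corollaries assume the $Y_i$ pairwise distinct while the final theorem admits irreducible $X$.
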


%====================================
\section{Discussion and Conjectures}

One interesting question is to know to which extend the strong connectivity is necessary for the multi-secant lemma to hold. We conjecture the following two propositions:\\
{\it Conjecture 1:} There exist a sequence of varieties $Y_1,...,Y_m$ such that a join component $S$ satisfies conditions 1-3 of the strong connectivity definition, but not the last condition. \\
{\it Conjecture 2:} The fourth condition of the strong connectivity is necessary for the multi-secant lemma.

The second conjecture can be intuitively apprehended by the following considerations. We shall use the same notations that in section~\ref{sec::main_result}. Let $a$ be a smooth point in $Y_1$. There is a curve in $S$, made of lines passing through $a$. The trace of this curve on $Y_2$ is also a curve denoted $C$. Let $b$ be a point on $C$. Through $b$, we can consider a further curve of lines of $S$. This will draws a curve on $Y_1$ through $a$. Now imagine $b$ lies on another connected component of $C$, the same process define another curve on $Y_1$ through $a$. In general these two curves have non parallel tangent vectors at $a$, so that these vectors define a plane. This construction shows that when the fourth condition of the strong connectivity is dropped, two dimensional moves can be constructed. To sum up, these simple considerations show that not assuming the fourth condition of the strong connectivity, will allow considerably more ways to cover the varieties by elementary moves, but will cancel the rigidity needed for enforcing these steps to stay in the same hyperplanes.

%=========================================
\section*{Acknowledgment}

We would like to express our gratitude to Marat Gizatullin for helpful discussions, as well as for giving us the reference of a counter example in positive characteristics~\cite{Shokurov}. 

\bibliographystyle{amsplain}

\end{document}